\def\titlerunning#1{\gdef\titrun{#1}}
\def\author#1{\gdef\autrun{\def\and{\unskip, }#1}\gdef\@author{#1}}
\def\address#1{{\def\and{\\\hspace*{18pt}}\renewcommand{\thefootnote}{}%
		\footnote {#1}}%
	\markboth{\autrun}{\titrun}}
\def\email#1{e-mail: #1}
\def\keywords#1{\par\medskip
	\noindent\textbf{Keywords.} #1}
\newtheorem{theorem}{Theorem}[section]
\newtheorem{corollary}[theorem]{Corollary}
\newtheorem{lemma}[theorem]{Lemma}
\newtheorem{proposition}[theorem]{Proposition}
\theoremstyle{definition}
\newtheorem{definition}[theorem]{Definition}
\newtheorem{remark}[theorem]{Remark}
\theoremstyle{example}
\newtheorem{example}[theorem]{Example}
\numberwithin{equation}{section}
\def \C {\mathbb{C}}
\def \a {\alpha }
\def \be {\beta}
\def\w {\omega}
\def\Om{\Omega}
\def\Ga{\Gamma}
\begin{document}
\baselineskip=17pt

\titlerunning{A note on Euler number of locally conformally K\"{a}hler manifolds}
\title{A note on Euler number of locally conformally K\"{a}hler manifolds}

\author{Teng Huang}

\date{}

\maketitle

\address{T. Huang: School of Mathematical Sciences, University of Science and Technology of China; Key Laboratory of Wu Wen-Tsun Mathematics, Chinese Academy of Sciences, Hefei, 230026, P.R. China; \email{htmath@ustc.edu.cn; htustc@gmail.com}}
\begin{abstract}
Let $M^{2n}$ be a compact Riemannian manifold of non-positive (resp. negative) sectional curvature. We call $(M,J,\theta)$ a $d$(bounded) locally conformally K\"{a}hler manifold if the lifted Lee form $\tilde{\theta}$ on the universal covering space of $M$ is $d$(bounded). We show that if $M^{2n}$ is homeomorphic to a $d$(bounded) LCK manifold, then its Euler number satisfies the inequality $(-1)^{n}\chi(M^{2n})\geq$ (resp. $>$) $0$. 
\end{abstract} 
\keywords{LCK manifold,  Euler number}
\section{Introduction}
In the early 19th century, Hopf proposed the following conjecture: \\
``Let $(M^{2n},g)$ be a closed $2n$-dimensional Riemannian manifold. If the sectional curvature of $M^{2n}$ is negative, then the Euler number $\chi(M^{2n})$ of $M^{2n}$  satisfies the inequality $(-1)^{n}\chi(M^{2n})>0$.'' \\
This conjecture is true in dimensions $2$ and $4$ \cite{Chern}. In dimension 2, by the Gauss-Bonnete formula, one can see that a closed Riemannian surface with negative section curvature has negative Euler number. In dimension 4, Chern \cite{Chern} proved that negative sectional curvature implies that Gauss–Bonnet integrand is pointwise positive.

Dodziuk \cite{Dodziuk} has proposed to settle the Hopf conjecture using the Atiyah index theorem for coverings (see \cite{Atiyah}). In this approach, one is required to prove a vanishing theorem for $L^{2}$ harmonic $k$-forms, $k\neq n$, on the universal covering of $M^{2n}$. The vanishing of these $L^{2}$ Betti numbers implies, by Atiyah's result, that $(-1)^{n}\chi(M^{2n})\geq0$. The strict inequality $(-1)^{n}\chi(M^{2n})>0$ follows provided one can establish the existence of nontrivial $L^{2}$ harmonic $n$-forms on the universal cover.  The program outlined above was carried out by Gromov \cite{Gromov} when the manifold in question is K\"{a}hler and is homotopy equivalent to a compact manifold with strictly negative sectional curvatures. The center idea is that Gromov introduced the notion of K\"{a}hler hyperbolicity. Gromov \cite{Gromov} points out that a bounded closed $k$-form, $k\geq2$, on a complete simply-connected manifold whose sectional curvatures are bounded above by a negative constant is automatically $d$(bounded) then he proved the above conjecture in the K\"{a}hler case. In order to attack Hopf Conjecture in the K\"{a}hlerian case when $K\leq0$ by extending Gromov's idea, Cao-Xavier \cite{CX} and Jost-Zuo \cite{JZ} independently introduced the concept of K\"{a}hler non-ellipticity, which includes nonpositively curved compact K\"{a}hler manifolds, and showed that their Euler characteristics satisfies $(-1)^{n}\chi(M^{2n})\geq0$. 

Inspired by K\"{a}hler geometry, Tan-Wang-Zhou gave the definition of symplectic parabolic manifold \cite{TWZ}.  By a well known result that a closed symplectic manifold satisfies the hard Lefschetz property if only if de Rham cohomology consists with the new symplectic cohomology, they proved that if $(M,\w)$ is a $2n$-dimensional closed symplectic parabolic manifold which satisfies the hard Lefschetz property, then the Euler number satisfies $(-1)^{n}\chi(M^{2n})\geq0$.

Let $(M,J,g)$ be a Hermitian manifold of complex dimension $n$, where $J$ denotes its complex structure, and $g$ its Hermitian metric. There is a interesting structure in $M$. We call $g$ a locally conformal K\"{a}hler  metric if $g$ is conformal to some local K\"{a}hlerian metric in the neighborhood of each point of $M^{2n}$. In many situations, the LCK structure becomes useful for the study of topology in non-k\"{a}hler geometry \cite{OV1,OV2}. The aim of this paper is to establish its validity for all $n$ in the $d$(bounded) locally conformally K\"{a}hler manifold, see Definition \ref{D1}.
\begin{theorem}\label{T6}
	Let $M^{2n}$ be a compact $2n$-dimensional Riemannian manifold of non-positive (resp. negative) curvature. If $M^{2n}$ is homeomorphic to a $d$(bounded) locally conformally K\"{a}hler manifold, then the Euler number of $M^{2n}$ satisfies the inequality $(-1)^{n}\chi(M^{2n})\geq$ (resp. $>$) $0$.	
\end{theorem}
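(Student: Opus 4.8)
The plan is to realise the program sketched in the Introduction: express $\chi(M)$ through the $L^2$-Betti numbers of a universal cover via Atiyah's index theorem for coverings, and prove those numbers vanish outside the middle degree by combining the (conformally) K\"ahler structure the LCK condition provides with the curvature hypothesis. Since $\chi$ is a homotopy invariant, $\chi(M)=\chi(N)$ for a $d$(bounded) LCK manifold $N$ homeomorphic to $M$; and since $\mathrm{sec}(M)\le 0$, the Cartan--Hadamard theorem makes $\tilde M\cong\mathbb{R}^{2n}$ contractible, so $M$ — hence $N$ — is aspherical, $\Gamma:=\pi_1(M)=\pi_1(N)$ acts freely and cocompactly on the contractible $\tilde N$, and $\Gamma$ is a CAT$(0)$ group, word-hyperbolic if $\mathrm{sec}(M)\le -c<0$. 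Atiyah's $L^2$-index theorem \cite{Atiyah} applied to the Gauss--Bonnet operator of the $\Gamma$-cover $\tilde N\to N$ gives
\[
(-1)^n\chi(M)=(-1)^n\chi(N)=\sum_{k=0}^{2n}(-1)^{n+k}\,b_k^{(2)}(\tilde N),\qquad b_k^{(2)}(\tilde N):=\dim_\Gamma\H^k_{(2)}(\tilde N),
\]
so it suffices to show (i) $b_k^{(2)}(\tilde N)=0$ for $k\neq n$ and, for the strict conclusion, (ii) $b_n^{(2)}(\tilde N)>0$.

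For (i) I would first put a K\"ahler metric on $\tilde N$. Write the lifted Lee form as $\tilde\theta=df$; the $d$(bounded) hypothesis lets us take $f$ bounded, and since $df=\pi^*\theta$ is $\Gamma$-invariant, $\gamma\mapsto\gamma^*f-f$ is a bounded homomorphism $\Gamma\to(\mathbb{R},+)$, hence trivial — so $f$, and with it the conformal factor $e^{-f}$, is $\Gamma$-invariant. Then $\tilde g_0:=e^{-f}\tilde g$ is a $\Gamma$-invariant complete K\"ahler metric with K\"ahler form $\tilde\omega_0=e^{-f}\tilde\omega$ (closed, of bounded pointwise norm), bi-Lipschitz to $\tilde g$ because $f$ is bounded, and descending to a compact K\"ahler manifold homeomorphic to $M$; in particular $b_k^{(2)}(\tilde N)=\dim_\Gamma\H^k_{(2)}(\tilde N,\tilde g_0)$. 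Now the curvature enters through $\Gamma$: $\tilde N$ is contractible so $\tilde\omega_0$ is exact, and when $\mathrm{sec}(M)\le -c<0$ the group $\Gamma$ is word-hyperbolic, so by Gromov's principle \cite{Gromov} in its homotopy-invariant form — the pullback of the K\"ahler form to the contractible universal cover of a compact K\"ahler manifold with word-hyperbolic $\pi_1$ is $d$(bounded) — one obtains $\tilde\omega_0=d\beta_0$ with $\beta_0$ bounded, i.e.\ $(\tilde N,\tilde g_0)$ is K\"ahler hyperbolic. When only $\mathrm{sec}(M)\le 0$, $\Gamma$ is CAT$(0)$ and one must instead produce a primitive $\beta_0$ of at most linear growth — the K\"ahler non-ellipticity of Cao--Xavier \cite{CX} and Jost--Zuo \cite{JZ} — which still suffices below.

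It then remains to run Gromov's vanishing on $(\tilde N,\tilde g_0)$. For $\alpha\in\H^k_{(2)}(\tilde N,\tilde g_0)$ with $k<n$: hard Lefschetz for the reduced $L^2$-cohomology of a complete K\"ahler manifold makes $L_{\tilde\omega_0}^{\,n-k}\colon H^k_{(2)}\to H^{2n-k}_{(2)}$ injective, while $\tilde\omega_0^{\,n-k}=d(\beta_0\wedge\tilde\omega_0^{\,n-k-1})$, a radial cutoff $\varphi_R$ ($\varphi_R\equiv 1$ on $B_R$, $\mathrm{supp}\,\varphi_R\subset B_{2R}$, $|d\varphi_R|\lesssim R^{-1}$), the growth bound on $\beta_0$, and $\|\alpha\|_{L^2(B_{2R}\setminus B_R)}\to 0$ force $[\tilde\omega_0^{\,n-k}\wedge\alpha]=0$ in reduced $L^2$-cohomology; injectivity gives $[\alpha]=0$, hence $\alpha=0$, and $k>n$ follows via the Hodge star. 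Thus $b_k^{(2)}(\tilde N)=0$ for $k\neq n$, and the displayed formula gives $(-1)^n\chi(M)=b_n^{(2)}(\tilde N)\ge 0$. For the strict case ($\mathrm{sec}(M)\le -c<0$) I would invoke Gromov's non-vanishing theorem \cite{Gromov}: twisting the trivial bundle on $(\tilde N,\tilde g_0)$ by the unitary connection $d+it\beta_0$ yields curvature $\sim t\,\tilde\omega_0$, so for large $|t|$ Bochner--Kodaira--Nakano estimates kill the higher $L^2$-Dolbeault groups and produce a nonzero $L^2$ holomorphic $n$-form, whence the reduced middle $L^2$-cohomology is nonzero and $b_n^{(2)}(\tilde N)>0$, so $(-1)^n\chi(M)>0$. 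I expect the main obstacle to be the construction of the growth-controlled primitive $\beta_0$ of $\tilde\omega_0$: in the negative case this is precisely Gromov's bounded-cohomology argument for word-hyperbolic groups, but in the nonpositive case one must show that being merely \emph{homeomorphic} to a nonpositively curved manifold still forces the associated compact K\"ahler manifold to be K\"ahler non-elliptic; a secondary point is to confirm that the $L^2$ K\"ahler Hodge theory used above (the K\"ahler identities, hard Lefschetz, the Hodge decomposition of reduced $L^2$-cohomology, and Gromov's non-vanishing) is available at the required generality.
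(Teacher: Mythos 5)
Your overall architecture (Atiyah's $L^2$-index theorem plus vanishing of $\dim_\Gamma\mathcal{H}^k_{(2)}$ for $k\neq n$ and nonvanishing for $k=n$) is the same as the paper's, and your opening reduction is both correct and genuinely sharper than anything stated in the paper: since $\tilde\theta=\pi^\ast\theta$ is $\Gamma$-invariant, $\gamma\mapsto\gamma^\ast f-f$ is a homomorphism $\Gamma\to(\mathbb{R},+)$, bounded because $f$ is bounded, hence zero; so $f$ descends, $[\theta]=0$, and a compact $d$(bounded) LCK manifold in the sense of Definition \ref{D1} is in fact globally conformally K\"ahler, i.e.\ conformal to a genuine compact K\"ahler manifold. (The paper never observes this; it only uses that $e^{-f}\tilde g$ is quasi-isometric to $\tilde g$, and in the proof of Theorem \ref{T6} it works with the possibly degenerate pullbacks $(G\circ F\circ\pi)^\ast\omega$, which your reduction avoids.) The vanishing and nonvanishing mechanisms you sketch (the $L^2$ Lefschetz/cutoff argument and Gromov's twisting) are exactly what the paper quotes as Theorem \ref{T5}, so quoting them is fine.

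The genuine gap is the one you yourself flag: in the nonpositive-curvature case you never produce the sublinear primitive of $\tilde\omega_0=e^{-f}\tilde\omega$ on $\tilde N$. Theorem \ref{T3} only applies on the universal cover of the manifold that actually carries the nonpositively curved metric, and $N$ is merely homeomorphic to it, so some device must transport the growth condition across the homotopy equivalence; "$\Gamma$ is CAT$(0)$" by itself gives nothing here (unlike the hyperbolic case, CAT$(0)$ groups have no bounded- or polynomially-bounded-cohomology surjectivity in degree $2$). This is precisely what the paper's Lemma \ref{L1} (Cao--Xavier) and the commutative diagram of lifted maps accomplish: push the bounded closed $2$-form to $\tilde M$ via $\tilde G$ (a lift of a homotopy inverse), apply Theorem \ref{T3} there, and pull back by $\tilde F$; because $\tilde F,\tilde G$ cover maps of compact manifolds they are uniformly Lipschitz quasi-isometries and the lifted homotopy $G\circ F\simeq\mathrm{id}$ moves points a bounded distance, so $d$(sublinear)/$d$(bounded) is preserved and the resulting metric is quasi-isometric to the given one, leaving the $\Gamma$-dimensions of $\mathcal{H}^k_{(2)}$ unchanged. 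Supplying this lemma (or, after your GCK reduction, simply invoking the main theorems of \cite{CX} and \cite{CY} for compact K\"ahler manifolds homotopy equivalent to nonpositively/negatively curved manifolds) closes the argument. In the negative case your substitute---surjectivity of $H^2_b(\Gamma)\to H^2(\Gamma)$ for word-hyperbolic $\Gamma$ on an aspherical manifold---is correct but is substantially heavier machinery (it is essentially the content of \cite{CY}), whereas the Lemma \ref{L1} transfer handles both curvature signs uniformly and elementarily.
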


\section{Preliminaries}
\subsection{$L^{2}$-Hodge number}
Let $(M,g)$ be a complete Riemannian manifold. Let $\Om^{k}(M)$ and $\Om^{k}_{0}(M)$ denote the smooth $k$-forms on $M$ and the smooth $k$-forms with compact support on $M$. Let $\langle\cdot,\cdot\rangle$ denote the pointwise inner product on $\Om^{\ast}(M)$ given by $g$ and duality. The global inner product is defined by
$$(\a,\be)=\int_{M}\langle\a,\be\rangle d{\rm{Vol}}_{g}.$$
We also write $|\a|^{2}=\langle\a,\a\rangle$, $\|\a|\|_{L^{2}(M,g)}=\int_{M}|\a|^{2}d\rm{Vol}_{g}$ and let
$$\Om^{k}_{(2)}(M,g)=\{\a\in\Om^{k}(M):\|\a\|_{L^{2}(M,g)}<\infty\}.$$
We denote by $\mathcal{H}^{k}_{(2)}(M,g)$ its space of $L^{2}$-harmonic $k$-forms, that is to say the space of $L^{2}$ $k$-forms which are closed and co-closed:
$$\mathcal{H}_{(2)}^{k}(M,g)=\{\a\in\Om^{k}_{(2)}(M,g):d\a=d^{\ast}\a=0 \},$$
where $$d:\Om^{k}_{0}(M)\rightarrow\Om^{k+1}_{0}(M)$$ is the exterior differential operator and 
$$d^{\ast}:\Om_{0}^{k+1}(M)\rightarrow\Om_{0}^{k}(M)$$
its formal adjoint. The operator $d$ does not depend on $g$ but $d^{\ast}$ does.

We assume throughout this subsection that $(M,g,J)$ is a compact complex $n$-dimensional manifold with a Hermitian metric $g$, and $\pi:(\tilde{M},\tilde{g},\tilde{J})\rightarrow(M,g,J)$ its universal covering with $\Gamma$ as an isometric group of deck transformations. Let $\mathcal{H}^{k}_{(2)}(M)$ be the spaces of $L^{2}$-harmonic $k$-forms on $\Om^{p,q}_{(2)}(\tilde{M})$, the squared integrable $k$-forms on $(\tilde{M},\tilde{g})$, and denote by $\dim_{\Gamma}\mathcal{H}^{k}_{(2)}(\tilde{M})$ the Von Neumann dimension of $\mathcal{H}^{k}_{(2)}(\tilde{M})$ with respect to $\Gamma$ \cite{Atiyah,Pansu}. Its precise definition is not important in our article but only the
following two basic facts are needed, see \cite{Gromov,Pansu}.\\
(1)  $\dim_{\Ga}\mathcal{H}_{(2)}^{k}(M)=0 \Leftrightarrow \mathcal{H}_{(2)}^{k}(M)=\{0\},$\\
(2) $\dim_{\Ga}\mathcal{H}$ is additive. Given $$0\rightarrow\mathcal{H}_{1}\rightarrow\mathcal{H}_{2}\rightarrow \mathcal{H}_{3}\rightarrow 0,$$
one have $$\dim_{\Ga}\mathcal{H}_{2}=\dim_{\Ga}\mathcal{H}_{1}+\dim_{\Ga}\mathcal{H}_{3}.$$
We denote by $h_{(2)}^{k}(M)$ the $L^{2}$-Hodge numbers of $M$, which are defined to be $$h_{(2)}^{k}(M):=\dim_{\Gamma}\mathcal{H}_{(2)}^{k}(\tilde{M}),\ (0\leq p,q\leq n).$$ 
It turns out that $h^{k}_{(2)}(M)$ are independent of the Hermitian metric $g$ and depend only on $(M,J)$. By the $L^{2}$-index theorem of Atiyah \cite{Atiyah}, we have the following crucial identities between $\chi(M)$ and the $L^{2}$-Hodge numbers $h_{(2)}^{k}(M)$: $$\chi(M)=\sum_{k=0}^{n}(-1)^{k}h_{(2)}^{k}(M).$$
\begin{remark}
	$L^{2}$-Betti number are not homotopy invariants for complete non-compact manifolds. But Dodziuk \cite{Dod} proves that the class of the representation of $\Gamma$ on the space of $L^{2}$-harmonic forms is a homotopy invariant of $M$. In particular the $\Gamma$-dimension (in the sense of Von Neumann) of the space of $L^{2}$-harmonic forms does not depend on the chosen $\Gamma$-invariant metric.
\end{remark}
\subsection{K\"{a}hler parabolic}
Let $(M,g)$ be a Riemannian manifold. A differential form $\a$ is called $d$(bounded) if there exists a form $\be$ on $M$ such that $\a=d\be$ and 
$$\|\be\|_{L^{\infty}(M,g)}=\sup_{x\in M}|\be(x)|_{g}<\infty.$$
It is obvious that if $M$ is compact, then every exact form is $d$(bounded). However, when $M$ is not compact, there exist smooth differential forms which are exact but not $d$(bounded). For instance, on $\mathbb{R}^{n}$, $\a=dx^{1}\wedge\cdots\wedge dx^{n}$ is exact, but it is not $d$(bounded). 

Let’s recall some concepts introduced in \cite{CX,JZ}. A differential form $\a$ on a complete non-compact Riemannian manifold $(M,g)$ is called $d$(sublinear) if there exist a differential form $\be$ and a number $c>0$ such that $\a=d\be$ and
$$ \ |\a(x)|_{g}\leq c,$$
$$ |\be(x)|_{g}\leq c(1+\rho_{g}(x,x_{0})),$$ 
where $\rho_{g}(x,x_{0})$ stands for the Riemannian distance between $x$ and a base point $x_{0}$ with respect to $g$.

The concept of $d$(sublinear) is both natural and flexible. We recall the following classical fact pointed out by Gromov \cite{CY,Atiyah} (negative case) and Cao-Xavier \cite{CX} (non-positive case).
\begin{theorem}\label{T3}
Let $M$ be a complete simply-connected manifold of non-positive (resp. negative) sectional curvature and $\a$ a bounded closed $k$-form on $M$. Then $\a$ is $d$(sublinear) for $k\geq 1$ (resp. $d$(bounded) for $k\geq2$).
\end{theorem}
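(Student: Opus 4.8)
The plan is to convert the geometric hypotheses into a concentration statement for $L^{2}$-harmonic forms and then read off the sign of $\chi$ from Atiyah's $L^{2}$-index theorem. By the index identity $\chi(M^{2n})=\sum_{k}(-1)^{k}h^{k}_{(2)}(M)$ with $h^{k}_{(2)}(M)=\dim_{\Ga}\mathcal{H}^{k}_{(2)}(\tilde{M})$, it suffices to prove that $h^{k}_{(2)}=0$ for every $k\neq n$: then $(-1)^{n}\chi(M^{2n})=h^{n}_{(2)}\geq0$, and the strict inequality in the negative case follows once a nontrivial $L^{2}$-harmonic $n$-form is produced. Since $\chi$ is a homeomorphism invariant and, by Dodziuk's theorem, the numbers $h^{k}_{(2)}$ are homotopy invariants independent of the chosen $\Gamma$-invariant metric, I may carry out the computation on the smooth model carrying the $d$(bounded) LCK structure $(M,J,\theta)$.

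First I would globalize the conformal factor on the universal cover. As $\theta$ is closed and $\tilde{M}$ is simply connected, $\tilde{\theta}=d\tilde{f}$, and the $d$(bounded) hypothesis permits choosing $\tilde{f}$ bounded. With $\tilde{\omega}$ the lift of the LCK fundamental form, which satisfies $d\tilde{\omega}=\tilde{\theta}\wedge\tilde{\omega}$, the form $\omega_{0}=e^{-\tilde{f}}\tilde{\omega}$ satisfies $d\omega_{0}=0$, so it is a genuine K\"{a}hler form and $g_{0}=e^{-\tilde{f}}\tilde{g}$ a complete K\"{a}hler metric, quasi-isometric to the lifted LCK metric because $e^{\pm\tilde{f}}$ are bounded above and below. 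The decisive point is that $\omega_{0}$ is a bounded $2$-form with respect to the lifted non-positively curved metric: its norm is $e^{-\tilde{f}}|\tilde{\omega}|$, and since $\tilde{\omega}$ and $\tilde{f}$ are pullbacks from the compact $M^{2n}$---on which any two Riemannian metrics are uniformly comparable---both factors are bounded.

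The curvature hypothesis now enters through Theorem \ref{T3}: the universal cover of $M^{2n}$ is a Cartan-Hadamard manifold, so the bounded closed $2$-form $\omega_{0}$ is $d$(sublinear) in the non-positive case and $d$(bounded) in the negative case, say $\omega_{0}=d\beta$ with $\beta$ of at most linear growth, respectively bounded. I would then deploy the Gromov-Cao-Xavier-Jost-Zuo Lefschetz argument. Writing $L=\omega_{0}\wedge(\cdot)$ for the Lefschetz operator, the hard Lefschetz isomorphism $L^{n-k}\colon\mathcal{H}^{k}_{(2)}\to\mathcal{H}^{2n-k}_{(2)}$ for $k<n$, combined with the identity $L\eta=d(\beta\wedge\eta)$ valid on closed forms, shows that $L$ annihilates the reduced $L^{2}$-cohomology once the growth of $\beta$ is controlled; hence all classes below the middle degree vanish, and Poincar\'{e} duality removes the degrees above $n$. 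This gives $h^{k}_{(2)}=0$ for $k\neq n$ and thus $(-1)^{n}\chi(M^{2n})\geq0$. In the negatively curved case $\omega_{0}$ is $d$(bounded), which is exactly Gromov's K\"{a}hler-hyperbolic condition; his sharper estimate then yields a nonzero $L^{2}$-harmonic $n$-form, so $h^{n}_{(2)}>0$ and the inequality is strict.

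I expect the genuine difficulty to lie in the Lefschetz estimate in the parabolic regime. When $\beta$ is merely $d$(sublinear) rather than bounded, the operator $L$ is not literally exact on $L^{2}$, and one cannot immediately conclude that it kills the reduced $L^{2}$-cohomology; this is precisely where the energy and spectral estimates of Cao-Xavier and Jost-Zuo must be invoked to trade the linear growth of the primitive against the decay of $L^{2}$-forms. A subtler bookkeeping issue is to keep the boundedness of $\omega_{0}$ measured throughout with respect to the non-positively curved metric rather than the LCK metric; the uniform comparability of metrics on the compact base is what reconciles the two, and verifying that this comparison survives on the universal cover is the step I would treat most carefully.
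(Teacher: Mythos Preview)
Your proposal does not address the stated theorem at all. The statement to be proved is Theorem~\ref{T3}: on a Cartan--Hadamard manifold, any bounded closed $k$-form admits a primitive of (sub)linear growth. What you have written is instead a proof sketch of the paper's \emph{main} theorem (Theorem~\ref{T6}) on the sign of the Euler characteristic of a $d$(bounded) LCK manifold. Indeed, you explicitly invoke Theorem~\ref{T3} as an input (``The curvature hypothesis now enters through Theorem~\ref{T3}''), so your argument is circular with respect to the assigned statement.

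For the record, the paper itself does not prove Theorem~\ref{T3}; it is quoted as a classical fact, attributed to Gromov in the negative case and to Cao--Xavier in the non-positive case. A genuine proof proceeds by integrating the form along geodesic rays from a base point $x_{0}$ (the Poincar\'e homotopy operator built from the diffeomorphism $\exp_{x_{0}}\colon T_{x_{0}}M\to M$): one writes $\beta(x)=\int_{0}^{1}\iota_{\partial_{t}}\varphi_{t}^{\ast}\alpha\,dt$ for the geodesic contraction $\varphi_{t}$ and estimates $|\beta(x)|$ via the Jacobi-field bounds that non-positive (resp.\ pinched negative) curvature provides. Your sketch contains none of this; the $L^{2}$-index machinery, the LCK conformal factor, and the Lefschetz argument are all irrelevant to proving Theorem~\ref{T3}.
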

\begin{proposition}
	If $(M,g)$ is a closed smooth Riemannian manifold of non-positive (resp. negative) sectional curvature and $\pi:(\tilde{M},\tilde{g})\rightarrow (M,g)$ its universal covering. If $\a$ is a closed $k$-form on $M$, then the lifted $k$-form $\tilde{\a}:=\pi^{\ast}\a$ is $d$(sublinear) for $k\geq 1$ (resp. $d$(bounded) for $k\geq2$).
\end{proposition}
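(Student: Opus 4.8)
The plan is to deduce the statement directly from Theorem \ref{T3}, the only work being to check that the lifted form $\tilde{\a}$ and the universal cover $\tilde{M}$ satisfy that theorem's hypotheses. First I would use compactness of $M$: since $M$ is closed, the continuous function $x\mapsto|\a(x)|_{g}$ attains its maximum, so $\a$ is a bounded $k$-form on $M$, and it is closed by assumption. This is the only place compactness of $M$ enters, but it is essential — it is what upgrades ``smooth form'' to ``bounded form''.

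Next I would record the standard facts about the universal covering $\pi:(\tilde{M},\tilde{g})\to(M,g)$. It is a Riemannian covering, hence a local isometry, so at corresponding points $\tilde{g}$ has the same sectional curvatures as $g$; in particular $\tilde{M}$ has non-positive (resp. negative) sectional curvature. Since $M$ is compact it is complete, and a Riemannian covering of a complete manifold is complete, so $\tilde{M}$ is complete; and $\tilde{M}$ is simply-connected by definition of the universal cover. Thus $\tilde{M}$ is a complete simply-connected manifold of non-positive (resp. negative) sectional curvature, exactly the ambient hypothesis of Theorem \ref{T3}.

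Then I would transfer the properties of $\a$ to $\tilde{\a}=\pi^{\ast}\a$. Pullback commutes with the exterior derivative, so $d\tilde{\a}=\pi^{\ast}(d\a)=0$, i.e. $\tilde{\a}$ is closed. Because $\pi$ is a local isometry, for every $\tilde{x}\in\tilde{M}$ one has $|\tilde{\a}(\tilde{x})|_{\tilde{g}}=|\a(\pi(\tilde{x}))|_{g}\leq\sup_{y\in M}|\a(y)|_{g}<\infty$, so $\tilde{\a}$ is a bounded closed $k$-form on $\tilde{M}$. Applying Theorem \ref{T3} to $\tilde{M}$ and $\tilde{\a}$ then gives that $\tilde{\a}$ is $d$(sublinear) for $k\geq 1$ in the non-positively curved case and $d$(bounded) for $k\geq 2$ in the negatively curved case, which is the assertion. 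There is no genuine obstacle in this argument; the only points needing care are the role of compactness of $M$ just noted and the fact that completeness and the curvature sign pass to $\tilde{M}$ precisely because $\pi$ is a local isometry.
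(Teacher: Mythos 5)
Your argument is correct and is exactly the (implicit) one intended in the paper, which states this proposition without proof as an immediate consequence of Theorem \ref{T3}: compactness gives boundedness of $\a$, the local isometry $\pi$ transfers boundedness, closedness, completeness and the curvature sign to $(\tilde{M},\tilde{g})$, and Theorem \ref{T3} applies. No discrepancy with the paper's approach.
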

The following results are the main theorems in \cite{CX,Gromov,JZ}.
\begin{theorem}\label{T5}
	Let $(M,g)$ be a complete $2n$-dimensional K\"{a}hler manifold with a $d$(sublinear) K\"{a}hler form $\w$. Then $\mathcal{H}^{k}_{(2)}(M,g)\neq\{0\}$,when $k\neq n$, i.e., 
	$$h_{(2)}^{k}(M)=0,\ k\neq n.$$ 
	Furthermore, if the K\"{a}hler form $\w$ is $d$(bounded), then $\mathcal{H}^{n}_{(2)}(M,g)\neq\{0\}$, i.e,
	$$h_{(2)}^{n}(M)\geq 1.$$
\end{theorem}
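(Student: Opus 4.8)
The plan is to follow Gromov's approach and its refinement by Cao--Xavier and Jost--Zuo, treating the vanishing assertion for $k\neq n$ (which will use only the $d$(sublinear) hypothesis) separately from the non-vanishing assertion for $k=n$ (which will need the full force of $d$(bounded)). Write $L=\w\wedge\cdot$ for the Lefschetz operator. Since the K\"{a}hler form $\w$ is parallel, $L$ commutes with the Hodge Laplacian $\Delta=dd^{\ast}+d^{\ast}d$, so $L$ maps $\mathcal{H}^{k}_{(2)}(M,g)$ into $\mathcal{H}^{k+2}_{(2)}(M,g)$. The only piece of linear algebra I need is the pointwise Lefschetz injectivity: $L\colon\La^{k}T^{\ast}_{x}M\to\La^{k+2}T^{\ast}_{x}M$ is injective for every $k\le n-1$, with a uniform lower bound $|L\a|\ge\de|\a|$. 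Consequently, to prove $\mathcal{H}^{k}_{(2)}(M,g)=\{0\}$ for $k<n$ it suffices to show that $L$ vanishes identically on $L^{2}$-harmonic forms.

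The key identity is elementary. Writing $\w=d\eta$ and taking $\a\in\mathcal{H}^{k}_{(2)}(M,g)$ (so $d\a=0$), we have $L\a=\w\wedge\a=d(\eta\wedge\a)$. If $\eta$ is bounded then $\eta\wedge\a\in L^{2}$, and since $L\a$ again lies in the harmonic space, hence $d^{\ast}L\a=0$, the $L^{2}$-Stokes theorem on the complete manifold $M$ gives $\|L\a\|^{2}=(L\a,d(\eta\wedge\a))=(d^{\ast}L\a,\eta\wedge\a)=0$, so $L\a=0$. When $\eta$ is only $d$(sublinear), $\eta\wedge\a$ need not be $L^{2}$ and this integration by parts is illicit; I would insert Lipschitz cutoffs $\phi_{R}$ with $\phi_{R}\equiv1$ on $B_{R}(x_{0})$, $\mathrm{supp}\,\phi_{R}\subset B_{2R}(x_{0})$ and $|d\phi_{R}|\le c/R$. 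Using $d^{\ast}L\a=0$ one computes $(\phi_{R}L\a,L\a)=(d^{\ast}(\phi_{R}L\a),\eta\wedge\a)=-(\iota_{\na\phi_{R}}L\a,\eta\wedge\a)$, whose absolute value is bounded by $(c/R)\int_{B_{2R}\setminus B_{R}}|L\a|\,|\eta|\,|\a|$. Since $|\eta|\le c(1+\rho)\le c(1+2R)$ on $B_{2R}$, the prefactor $(1+2R)/R$ stays bounded and Cauchy--Schwarz controls the right-hand side by $C\|L\a\|_{L^{2}(B_{2R}\setminus B_{R})}\|\a\|_{L^{2}(B_{2R}\setminus B_{R})}$, which tends to $0$ as $R\to\infty$ because $L\a,\a\in L^{2}(M)$. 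Letting $R\to\infty$ yields $\|L\a\|^{2}=0$. This exact cancellation of the linear growth of $\eta$ against $|d\phi_{R}|\sim 1/R$ is the whole reason the $d$(sublinear) hypothesis is the right one.

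Combining the two cases, $L$ vanishes on $\bigoplus_{k}\mathcal{H}^{k}_{(2)}(M,g)$. For $k<n$, pointwise injectivity of $L$ forces every $\a\in\mathcal{H}^{k}_{(2)}(M,g)$ to vanish, so $h^{k}_{(2)}(M)=0$; for $k>n$ the Hodge star $\star\colon\mathcal{H}^{k}_{(2)}(M,g)\to\mathcal{H}^{2n-k}_{(2)}(M,g)$ is an isometric isomorphism, reducing to the already-settled case $2n-k<n$. Hence $h^{k}_{(2)}(M)=0$ for all $k\neq n$, which is the first assertion.

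For the non-vanishing I would use that a bounded (rather than merely sublinear) primitive $\eta$ upgrades the vanishing of harmonic forms to a genuine spectral gap: the cutoff argument above produces no uniform lower bound, but for bounded $\eta$, combining $\|L\a\|\ge\de\|\a\|$ (for $k<n$; use $\La$ or $\star$ for $k>n$) with the K\"{a}hler identities relating $[L,d^{\ast}]$ to the conjugate differential $d^{c}$ yields an a priori estimate $\|\a\|\le C(\|d\a\|+\|d^{\ast}\a\|)$ for all compactly supported $k$-forms with $k\neq n$; that is, $0$ lies outside the spectrum of $\Delta$ on $\Om^{k}_{(2)}(M,g)$ when $k\neq n$. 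Granting this gap, Atiyah's $L^{2}$-index theorem together with the vanishing already proved gives $\chi(M)=(-1)^{n}h^{n}_{(2)}(M)$, so $\mathcal{H}^{n}_{(2)}(M,g)\neq\{0\}$ is equivalent to $\chi(M)\neq0$. Following Gromov, I would obtain this by applying the $L^{2}$-index theorem to the Dolbeault complex instead of to $\chi$ directly: the gap forces the $\bar{\pa}$-cohomology to concentrate in total degree $n$, and an $L^{2}$-existence argument of H\"{o}rmander--Andreotti--Vesentini type, whose positivity input is precisely the $d$(boundedness) of $\w$, produces a nonzero $L^{2}$-harmonic form of top Dolbeault degree and hence a nontrivial element of $\mathcal{H}^{n}_{(2)}(M,g)$, whence $h^{n}_{(2)}(M)\ge1$. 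I expect this last step to be the principal obstacle: the vanishing half is soft (one identity and a cutoff), whereas the non-vanishing half is genuinely index-theoretic and is exactly where $d$(bounded) cannot be relaxed to $d$(sublinear) --- on flat $\C^{n}$ the K\"{a}hler form is $d$(sublinear) yet every $\mathcal{H}^{k}_{(2)}$, including the middle one, is trivial.
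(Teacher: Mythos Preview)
The paper does not supply its own proof of this theorem: it is quoted as a result of Gromov, Cao--Xavier, and Jost--Zuo, with only the accompanying remark that the vanishing half is ``a direct application of the $L^{2}$ version's Lefschetz theorem'' and that the non-vanishing half comes from Gromov's analysis of the lower bound of the spectrum of the Laplacian. Your sketch follows exactly this line --- the cutoff argument exploiting the sublinear primitive is the Cao--Xavier/Jost--Zuo mechanism for the vanishing, and the spectral-gap-plus-$L^{2}$-Dolbeault-index route is Gromov's for the non-vanishing --- so the proposal is correct and coincides with what the paper invokes.
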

\begin{remark}
	The proof for the vanishing type results in all cases is a direct application of the $L^{2}$ version’s Lefschetz theorem.  The real hard part is the nonvanishing results in negative case, where a careful analysis on the lower bound of the eigenvalues of the Laplacian on $L^{2}$-harmonic forms was carried out in \cite{Gromov}.	
\end{remark}
\subsection{Locally conformally K\"{a}hler manifold}
In this section we state several equivalent definitions of the notion of a locally conformal K\"{a}hler (LCK) manifold.  Let $(M,J,g)$ be a complex manifold of $\dim_{\C}=n>1$, where $J$ denotes its complex structure and $g$ its Hermitian metric. Locally conformally K\"{a}hler (LCK) manifolds are, by definition, complex manifolds  admitting a K\"{a}hler covering with deck transformation acting by K\"{a}hler homotheties.  An equivalent definition is that there is an open cover $\{U_{i}\}$ of $M$ and a family $\{f_{i}\}$ of $C^{\infty}$ functions $f_{i}:U_{i}\rightarrow\mathbb{R}$ so that each local metric ${\color{red}g_{i}=\exp(-f_{i})g|_{U_{i}}}$ on $U_{i}$ is K\"{a}hlerian. Then the metrics ${\color{red}e^{-f_{i}}g_{i}}$ glue to a global metric whose associated $2$-form $\w$ satisfies the integrability condition  $d\w=\theta\wedge\w$,  thus being locally conformal with the K\"{a}hler metrics $g_{i}$. Here ${\color{red}\theta|_{U_{i}}=df_{i}}$. The closed $1$-form $\theta$ is called the \textbf{Lee form}. This gives another definition of an LCK structure, which will be used in this paper \cite{DO}.
\begin{definition}
Let $(M,J,g)$ be a complex Hermitian manifold, $\dim_{\C}M>1$, with 
$$d\w=\theta\wedge\w,$$
where $\theta$ is a closed 1-form. Then $M$ is called a \textbf{locally conformally K\"{a}hler (LCK)} manifold.
\end{definition}
A compact LCK manifold never admits a K\"{a}hler structure, unless the
cohomology class $\theta\in H^{1}(M)$ vanishes \cite{Vaisman}. 

If $(M,g)$ is a compact Riemannian manifold and $\pi:(\tilde{M},\tilde{g})\rightarrow (M,g)$ its universal covering. There is a very known result as follows. 
\begin{proposition}(\cite[Proposition 1]{JZ})\label{P3}
	If $\a$ is a closed $1$-form on {a compact Riemannian manifold $M$}, then the lifted $1$-form $\tilde{\a}:=\pi^{\ast}\a$ {on the universal covering space $\tilde{M}$} is $d$(sublinear).	
\end{proposition}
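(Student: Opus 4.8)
The plan is to verify the two defining conditions of $d$(sublinear) directly, using only that $M$ is compact and that $\tilde{M}$ is its simply connected, complete universal cover; in particular no curvature hypothesis is needed for this $1$-form statement. First I would observe that closedness and boundedness of $\tilde{\a}$ are essentially free. Since $\a$ is a smooth form on the compact manifold $M$, the pointwise norm $|\a|_g$ attains a finite maximum $c$; because $\pi$ is a local isometry one has $|\tilde{\a}(\tilde{x})|_{\tilde{g}} = |\a(\pi(\tilde{x}))|_g \leq c$ for every $\tilde{x}\in\tilde{M}$, so $\tilde{\a}$ is a bounded $1$-form. Moreover $d\tilde{\a} = \pi^{\ast}d\a = 0$, so $\tilde{\a}$ is closed.

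The second step is to produce the primitive. Since $\tilde{M}$ is simply connected and $\tilde{\a}$ is closed, $\tilde{\a}$ is exact; concretely I would fix a base point $\tilde{x}_0$ and set
$$\tilde{\be}(\tilde{x}) = \int_{\tilde{x}_0}^{\tilde{x}} \tilde{\a},$$
the integral taken along any piecewise smooth path from $\tilde{x}_0$ to $\tilde{x}$. Simple connectivity together with closedness guarantees, via Stokes' theorem, that this value is independent of the chosen path, so $\tilde{\be}$ is a well-defined smooth function with $d\tilde{\be} = \tilde{\a}$. This is precisely where passing to the universal cover is essential: on $M$ itself $\a$ need not be exact when $[\a]\neq 0$ in $H^{1}(M)$.

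Finally I would estimate $\tilde{\be}$ to obtain the linear growth bound. The metric $\tilde{g}$ is complete, being the pullback of a metric on a compact manifold, so by Hopf--Rinow there is a minimizing geodesic $\gamma:[0,L]\to\tilde{M}$, parametrized by arc length, joining $\tilde{x}_0$ to $\tilde{x}$ with $L = \rho_{\tilde{g}}(\tilde{x},\tilde{x}_0)$. Evaluating $\tilde{\be}$ along $\gamma$ and using the bound from the first step,
$$|\tilde{\be}(\tilde{x})| = \Big|\int_0^L \tilde{\a}(\gamma'(t))\,dt\Big| \leq \int_0^L |\tilde{\a}(\gamma(t))|_{\tilde{g}}\,|\gamma'(t)|_{\tilde{g}}\,dt \leq c\,L = c\,\rho_{\tilde{g}}(\tilde{x},\tilde{x}_0) \leq c\big(1+\rho_{\tilde{g}}(\tilde{x},\tilde{x}_0)\big).$$
Together with the pointwise bound $|\tilde{\a}|\leq c$ this is exactly the definition of $d$(sublinear).

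There is no serious obstacle here: the whole content lies in recognizing that for a $1$-form the primitive is a function, whose growth is controlled simply by integrating the bounded form $\tilde{\a}$ along minimizing geodesics. The only point requiring care is the well-definedness of $\tilde{\be}$, which rests on the simple connectivity of $\tilde{M}$. This is what distinguishes Proposition \ref{P3} from the higher-degree statement of Theorem \ref{T3}, where constructing a primitive with controlled growth genuinely requires the non-positivity (resp. negativity) of the sectional curvature.
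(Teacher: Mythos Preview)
Your argument is correct and is the standard proof of this fact. Note, however, that the paper does not actually supply its own proof of Proposition~\ref{P3}: it simply cites \cite[Proposition~1]{JZ} and moves on, so there is nothing in the paper to compare your approach against. Your write-up is essentially the argument one finds in the cited reference.
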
 
Following Proposition \ref{P3}, it implies that the lifted Lee form $\pi^{\ast}\theta$ is $d$(sublinear) with respect to metric $\pi^{\ast}g$. In this article, we now introduce a class of LCK manifold as follows. 
\begin{definition}\label{D1}
	Let $(M,J,g)$ be a compact $2n$-dimensional LCK manifold with the Lee form $\theta$. We denote by $(\tilde{M},\tilde{J},\tilde{g})$ the universal covering space of $(M,J,g)$ and $\tilde{\theta}:=df$ the lifted Lee form. We call $(M,J,g)$ a $d$(bounded) LCK manifold, if $f$ is bounded.  
\end{definition}
\begin{example}
	A manifold $(M,J,g)$ called \textbf{globally conformal K\"{a}hler (GCK)} if there is a $C^{\infty}$ function $f:M \rightarrow \mathbb{R}$ such that the metric $e^{-f}g$ is K\"{a}hlerian, i.e, the Lee form $\theta=df$. Thus a closed, GCK manifold $(M,J,g)$ is a $d$(bounded) manifold because the lifted function $\pi^{\ast}f$ on $\tilde{M}$ is also bounded. One can see that if the first Betti number $b^{1}(M)=0$ (for example, $M$ is simply-connected) then $M$ is GCK. In particular, the universal covering space of a LCK manifold is GCK. 	
\end{example}
\begin{remark}
	A compact LCK (but not GCK) manifold cannot have strictly positive sectional curvature because (by a classical result of J. Synge \cite{Synge}) it would be simply connected and thus GCK.
\end{remark}
\section{Euler number of LCK manifold}
If $g_{1}=e^{f}g_{2}$ are two conformally equivalent Riemannian metric on a smooth $2n$-dimensional manifold $M$, then we have the equality, see \cite[Proposition 5.2]{Carron} : $$\mathcal{H}^{n}_{(2)}(M,g_{1})=\mathcal{H}^{n}_{(2)}(M,g_{2}).$$
The spaces of  $L^{2}$ harmonic forms depend only on the $L^{2}$ structures, hence if $g_{1}$ and $g_{2}$ are two Riemannian metrics on a manifold $M$ ($g_{1}$, $g_{2}$ need not to be complete) which are quasi-isometric that is for a certain constant 
$$C^{-1}g_{1}\leq g_{2}\leq Cg_{1},$$
then clearly the Hilbert spaces $\Om_{(2)}^{k}(M,g_{1})$ and $\Om_{(2)}^{k}(M,g_{2})$ are the same with equivalent norms. Hence the quotient spaces defining reduced $L^{2}$ cohomology are the same, that is
$$\mathcal{H}_{(2)}^{k}(M,g_{1})=\mathcal{H}_{(2)}^{k}(M,g_{2}).$$
In fact, the spaces $\mathcal{H}_{(2)}^{k}(M,g)$ are biLipschitz-homotopy invariants of $(M,g)$, see \cite[Proposition 3.1]{Carron}.
\begin{proposition}\label{P1}
	Let $(M,J,g)$ be a compact $2n$-dimensional $d$(bounded) LCK manifold with the Lee form $\theta$. If the sectional curvature of $M$ is non-positive (resp. negative), then there exist a $1$-form $\eta$ and a function $f$ on the universal covering space $(\tilde{M},\tilde{J},\tilde{g})$ such that\\
	(1) $$\tilde{\theta}=df,$$
	where $\tilde{\theta}$ is the lifted  $1$-form on $\tilde{M}$,\\
	(2) $$e^{-f}\tilde{\w}=d\eta,$$
	where $\tilde{\w}$ is the lifted K\"{a}hler form of $(\tilde{M},\tilde{g})$,\\
	(3) $$|\eta(x)|_{e^{-f}\tilde{g}}\leq c(\rho_{e^{-f}\tilde{g}}(x,x_{0})+1)$$
	 (resp. $|\eta(x)|_{e^{-f}\tilde{g}}\leq c$), where $c$ is uniform positive constant.
	 
	In particular, in non-positive sectional curvature case, for any $k\neq n$, 
	$$\mathcal{H}^{k}_{(2)}(\tilde{M},e^{-f}\tilde{g})=\{0\}$$
	and
	in negative sectional curvature case, 
	$$\mathcal{H}^{n}_{(2)}(\tilde{M},e^{-f}\tilde{g})\neq\{0\}.$$
\end{proposition}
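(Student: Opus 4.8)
The plan is to establish (1)--(3) on the universal cover $\tilde{M}$, working simultaneously with the pulled-back metric $\tilde{g}=\pi^{\ast}g$ (which inherits the curvature hypothesis) and with the conformally rescaled metric $e^{-f}\tilde{g}$ (which will be K\"ahler), the boundedness of $f$ serving to pass estimates between the two. Item (1) is immediate: $\tilde{\theta}=\pi^{\ast}\theta$ is a closed $1$-form on the simply-connected space $\tilde{M}$, hence exact, and by Definition \ref{D1} a primitive $f$ with $\tilde{\theta}=df$ may be taken bounded. For (2), pull back the LCK identity $d\w=\theta\wedge\w$ to get $d\tilde{\w}=df\wedge\tilde{\w}$ on $\tilde{M}$, so that
$$d\big(e^{-f}\tilde{\w}\big)=-e^{-f}\,df\wedge\tilde{\w}+e^{-f}\,df\wedge\tilde{\w}=0 .$$
Thus $e^{-f}\tilde{\w}$ is a closed $2$-form; it is the fundamental form of the Hermitian metric $e^{-f}\tilde{g}$ with respect to $\tilde{J}$, and a Hermitian metric with closed fundamental form is K\"ahler, so $(\tilde{M},\tilde{J},e^{-f}\tilde{g})$ is K\"ahler. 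Because $f$ is bounded there are constants $a,b>0$ with $a\tilde{g}\le e^{-f}\tilde{g}\le b\tilde{g}$, so $e^{-f}\tilde{g}$ is quasi-isometric to $\tilde{g}$, which is complete (being the lift to a covering of a metric on the compact manifold $M$); hence $e^{-f}\tilde{g}$ is complete as well.

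The core of the argument is (3), and here the choice of metric matters. Since $f$ is bounded and $|\tilde{\w}|_{\tilde{g}}$ is constant (the pointwise norm of the fundamental form of the Hermitian metric $\tilde{g}$), the form $e^{-f}\tilde{\w}$ is a bounded closed $2$-form on $(\tilde{M},\tilde{g})$. Now $(\tilde{M},\tilde{g})$ is complete, simply connected, and of non-positive (resp.\ negative) sectional curvature, so Theorem \ref{T3} applies \emph{on $(\tilde{M},\tilde{g})$}: there is a $1$-form $\eta$ with $e^{-f}\tilde{\w}=d\eta$ and $|\eta(x)|_{\tilde{g}}\le c\,(1+\rho_{\tilde{g}}(x,x_{0}))$ in the non-positive case, resp.\ $|\eta(x)|_{\tilde{g}}\le c$ in the negative case. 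It is essential to invoke Theorem \ref{T3} with respect to $\tilde{g}$, since the K\"ahler metric $e^{-f}\tilde{g}$ carries no curvature bound. To deduce (3) one rewrites these estimates in $e^{-f}\tilde{g}$: as $f$ is bounded, $|\eta|_{e^{-f}\tilde{g}}=e^{f/2}|\eta|_{\tilde{g}}\le C|\eta|_{\tilde{g}}$; the distances $\rho_{\tilde{g}}(\cdot,x_{0})$ and $\rho_{e^{-f}\tilde{g}}(\cdot,x_{0})$ differ only by multiplicative constants by quasi-isometry; and $|e^{-f}\tilde{\w}|_{e^{-f}\tilde{g}}$ is constant (again the norm of a fundamental form), hence bounded. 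Combining these gives exactly the bounds in (3).

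Finally, (1)--(3) say precisely that $(\tilde{M},e^{-f}\tilde{g})$ is a complete K\"ahler manifold whose K\"ahler form $e^{-f}\tilde{\w}$ is $d$(sublinear) in the non-positive case and $d$(bounded) in the negative case. Theorem \ref{T5} then yields $\mathcal{H}^{k}_{(2)}(\tilde{M},e^{-f}\tilde{g})=\{0\}$ for every $k\ne n$ in the non-positive case, and in addition $\mathcal{H}^{n}_{(2)}(\tilde{M},e^{-f}\tilde{g})\ne\{0\}$ in the negative case, as claimed. The only genuine obstacle, and the reason the hypothesis is stated as the boundedness of $f$, is this two-metric bookkeeping: Theorem \ref{T3} needs the curvature and so must be applied on $(\tilde{M},\tilde{g})$, Theorem \ref{T5} needs the K\"ahler structure and so must be applied on $(\tilde{M},e^{-f}\tilde{g})$, and the boundedness of $f$ is exactly what makes these two metrics quasi-isometric so that all of the estimates transfer between them.
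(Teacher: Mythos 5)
Your proposal is correct and follows essentially the same route as the paper: obtain $f$ from the $d$(bounded) hypothesis, check $e^{-f}\tilde{\w}$ is a bounded closed form on $(\tilde{M},\tilde{g})$, apply Theorem \ref{T3} there, transfer the estimates to $e^{-f}\tilde{g}$ via boundedness of $f$ and quasi-isometry, and conclude with Theorem \ref{T5}. Your bookkeeping is in fact slightly more careful than the paper's (you note completeness of $e^{-f}\tilde{g}$ and use the correct conformal factor $e^{f/2}$ for $1$-forms, where the paper writes $e^{-f/2}$ — harmless since $f$ is bounded).
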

\begin{proof}
	Let $\pi:(\tilde{M},\tilde{g})\rightarrow(M,g)$ be the universal covering map, $\w$ the K\"{a}hler form  and $\theta$ the Lee form on $M$. By the definition of $d$(bounded) LCK manifold, it implies that there exists a bounded function $f$ on $\tilde{M}$ such that the lifted Lee form $$\tilde{\theta}:=\pi^{\ast}\theta=df.$$ Noting that $$d(e^{-f}\tilde{\w})=-df\wedge e^{-f}\tilde{\w}+e^{-f}d\tilde{\w}=e^{-f}\tilde{\w}\wedge(\tilde{\theta}-df)=0$$ 
	and 
	$$|e^{-f}\tilde{\w}|_{\tilde{g}}\leq e^{-f}|\tilde{\w}|_{\tilde{g}}<\infty.$$  
	Hence the 2-form $e^{-f}\tilde{\w}$ is a bounded closed form on $(\tilde{M},\tilde{g})$. If the sectional curvature of $M$ is non-positive (resp. negative), it follows from Theorem \ref{T3} that 
	$$e^{-f}\tilde{\w}=d\eta$$
	 and  
	$$|\eta(x)|_{\tilde{g}}\leq c(1+\rho_{\tilde{g}}(x,x_{0})) (resp.\ |\eta(x)|_{\tilde{g}}\leq c),$$
	where $\eta$ is a one-form on $\tilde{M}$ and $c$ is a uniform positive constant. The metric on $\tilde{M}$ induced by the K\"{a}hler form $e^{-f}\tilde{\w}$ is $e^{-f}\tilde{g}$. The metrics $\tilde{g}$ and $e^{-f}\tilde{g}$ are quasi-isometric, since there exists a uniform positive constant $c'$ such that $$\frac{1}{c'}\rho_{\tilde{g}}(x,x_{0})\leq\rho_{e^{-f}\tilde{g}}(x,x_{0})\leq c'\rho_{\tilde{g}}(x,x_{0}).$$
	We then have (1) if the sectional curvature of $M$ is non-positive, then
	$$|\eta(x)|_{e^{-f}\tilde{g}}=e^{-\frac{f}{2}}|\eta(x)|_{\tilde{g}}\leq c''(\rho_{\tilde{g}}(x,x_{0})+1)\leq c''(\rho_{e^{-f}\tilde{g}}(x,x_{0})+1);$$
	(2) if the sectional curvature of $X$ is negative, then
	$$|\eta(x)|_{e^{-f}\tilde{g}}=e^{-\frac{f}{2}}|\eta(x)|_{\tilde{g}}\leq c''.$$
	Here $c''$ is a uniform positive constant only depends on $f$ and the metric $g$. Following from Theorem \ref{T5}, then for any $k\neq n$, $\mathcal{H}^{k}_{(2)}(\tilde{M},e^{-f}\tilde{g})=\{0\}$ (non-positive case) and  $\mathcal{H}^{n}_{(2)}(\tilde{M},e^{-f}\tilde{g})\neq\{0\}$ (negative case).
\end{proof}
We then have 
\begin{theorem}\label{T4}
	Let $M$ be a compact $2n$-dimensional $d$(bounded) LCK manifold. If the sectional curvature of $M$ is non-positive (resp. negative), then the Euler characteristic of $M$ satisfies the inequality $(-1)^{n}\chi(M)\geq$ (resp. $>$) $0$.		
\end{theorem}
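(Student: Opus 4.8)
The plan is to read off Theorem~\ref{T4} from Proposition~\ref{P1}, the quasi-isometry invariance of $L^{2}$-harmonic forms recalled just before it, and the Atiyah $L^{2}$-index theorem. The subtlety to keep in mind is that Proposition~\ref{P1} produces vanishing (and, in the negative case, non-vanishing) of $L^{2}$-harmonic forms for the \emph{conformally changed} metric $e^{-f}\tilde g$ on the universal cover, whereas the $L^{2}$-Hodge numbers $h^{k}_{(2)}(M)$ are defined through the $\Gamma$-invariant lifted metric $\tilde g$; the $d$(bounded) hypothesis is exactly what bridges this gap. Concretely, I would first apply Proposition~\ref{P1}: since $M$ is a $d$(bounded) LCK manifold of non-positive (resp.\ negative) sectional curvature, there is a bounded function $f$ on $\tilde M$ with $\tilde\theta=df$ such that $e^{-f}\tilde\w$ is a $d$(sublinear) (resp.\ $d$(bounded)) Kähler form for $e^{-f}\tilde g$, and hence
$$\mathcal{H}^{k}_{(2)}(\tilde M,e^{-f}\tilde g)=\{0\}\quad(k\neq n),\qquad\bigl(\text{resp.\ also }\ \mathcal{H}^{n}_{(2)}(\tilde M,e^{-f}\tilde g)\neq\{0\}\bigr).$$

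Next I would compare $e^{-f}\tilde g$ with $\tilde g$. Setting $C:=\sup_{\tilde M}|f|<\infty$, one has $e^{-C}\tilde g\le e^{-f}\tilde g\le e^{C}\tilde g$, so the two metrics on $\tilde M$ are quasi-isometric; note $\tilde g$ is complete (as $M$ is compact) and $\Gamma$-invariant, while $e^{-f}\tilde g$ is complete and $\Gamma$-invariant only up to homotheties. By the quasi-isometry invariance of the spaces of $L^{2}$-harmonic forms recalled before Proposition~\ref{P1} (equivalently, by identifying them with reduced $L^{2}$-cohomology, which depends only on the $L^{2}$-structure and $d$), one gets
$$\mathcal{H}^{k}_{(2)}(\tilde M,\tilde g)=\mathcal{H}^{k}_{(2)}(\tilde M,e^{-f}\tilde g)\qquad\text{for all }k,$$
so $\mathcal{H}^{k}_{(2)}(\tilde M,\tilde g)=\{0\}$ for $k\neq n$, and $\mathcal{H}^{n}_{(2)}(\tilde M,\tilde g)\neq\{0\}$ in the negative-curvature case.

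Finally I would pass to $L^{2}$-Hodge numbers and the Euler characteristic. By fact (1) of \S2.1, $\mathcal{H}^{k}_{(2)}(\tilde M,\tilde g)=\{0\}$ forces $h^{k}_{(2)}(M)=\dim_{\Gamma}\mathcal{H}^{k}_{(2)}(\tilde M,\tilde g)=0$ for $k\neq n$, while in the negative case $\mathcal{H}^{n}_{(2)}(\tilde M,\tilde g)\neq\{0\}$ gives $h^{n}_{(2)}(M)>0$. The Atiyah $L^{2}$-index theorem then yields
$$\chi(M)=\sum_{k}(-1)^{k}h^{k}_{(2)}(M)=(-1)^{n}h^{n}_{(2)}(M),$$
hence $(-1)^{n}\chi(M)=h^{n}_{(2)}(M)\geq0$ in general, with strict inequality when the sectional curvature is negative.

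The main obstacle is the middle step: the quasi-isometry $\tilde g\sim e^{-f}\tilde g$ is not $\Gamma$-equivariant in a way that would directly match Von Neumann dimensions, so one must be careful to use only the metric-independence of the bare (set-theoretic) vanishing of $\mathcal{H}^{k}_{(2)}$ together with fact (1), rather than trying to transport $\dim_{\Gamma}$ itself across the conformal change. The one substantive input that makes this legitimate — and that separates $d$(bounded) LCK manifolds from general LCK manifolds, where $f$ is only $d$(sublinear) — is the boundedness of $f$, which keeps the conformal factor $e^{-f}$ bounded above and below and hence renders $\tilde g$ and $e^{-f}\tilde g$ quasi-isometric.
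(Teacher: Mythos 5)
Your proposal is correct and follows essentially the same route as the paper: apply Proposition \ref{P1} to get (non)vanishing of $L^{2}$-harmonic forms for $e^{-f}\tilde g$, use boundedness of $f$ to see $\tilde g$ and $e^{-f}\tilde g$ are quasi-isometric and hence have the same $L^{2}$-harmonic spaces, then conclude via the $\Gamma$-dimension facts and Atiyah's $L^{2}$-index theorem. Your extra care about transporting only the set-theoretic (non)vanishing, rather than $\dim_{\Gamma}$ itself, across the non-equivariant conformal change is a worthwhile clarification of a point the paper passes over silently, but it is not a different argument.
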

\begin{proof}
	Nothing that the metrics $\tilde{g}$, $e^{-f}\tilde{g}$ are quasi-isometric. Then following Proposition \ref{P1}, in  non-positive sectional curvature case, we obtain that the spaces of  $L^{2}$-harmonic $k$-form on $(\tilde{M},\tilde{g})$ satisfy $$\mathcal{H}_{(2)}^{k}(\tilde{M},\tilde{g})=\{0\},\forall k\neq n$$
	and in negative sectional curvature case
	$$\mathcal{H}_{(2)}^{n}(\tilde{M},\tilde{g})\neq\{0\}.$$
	The Atiyah index theorem for covers \cite{Atiyah} then gives $(-1)^{n}\chi(M)\geq$ (resp. $>$) $0$.
\end{proof}
\begin{corollary}
	Let $M^{2n}$ be a compact $2n$-dimensional Riemannian manifold of non-positive (resp. negative) curvature. If $M^{2n}$ is homeomorphic to a GCK manifold, then the Euler characteristic of $M^{2n}$ satisfies the inequality $(-1)^{n}\chi(M^{2n})\geq$ (resp. $>$) $0$.	
\end{corollary}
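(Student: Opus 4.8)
By the Example above a closed GCK manifold is a $d$(bounded) LCK manifold, so the Corollary is the GCK special case of Theorem \ref{T6}; the plan below spells the argument out directly, which is the cleanest instance of the reasoning behind Theorem \ref{T6} and displays where the homeomorphism is needed, namely to transport the curvature hypothesis (which lives on $M^{2n}$, not on the GCK model) onto the K\"{a}hler side.

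Let $(N,J,g)$ be a closed GCK manifold homeomorphic to $M^{2n}$, with $f\colon N\to\mathbb{R}$ such that $g'=e^{-f}g$ is K\"{a}hler, and let $\w'$ be its K\"{a}hler form, a genuine closed $2$-form on $N$. Then $(N,J,g')$ is a closed K\"{a}hler manifold, and since $M^{2n}\cong N$ is a homotopy equivalence, $\chi(M^{2n})=\chi(N)$, so it suffices to prove $(-1)^n\chi(N)\ge$ (resp.\ $>$) $0$. First I would show that the lifted K\"{a}hler form $\pi_N^{\ast}\w'$ on the universal cover $\tilde N$ is $d$(sublinear) (resp.\ $d$(bounded)). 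By Cartan--Hadamard $M^{2n}$ is aspherical, hence so is $N$, with $\pi_1(N)\cong\pi_1(M^{2n})=:\Gamma$ and $H^2(N;\mathbb{R})\cong H^2(M^{2n};\mathbb{R})$. Choose a smooth map $\phi\colon M^{2n}\to N$ homotopic to the homeomorphism, a smooth homotopy inverse $\psi\colon N\to M^{2n}$, and a closed $2$-form $\beta$ on $M^{2n}$ with $[\beta]=\phi^{\ast}[\w']$. Since $\tilde M^{2n}$ is complete, simply connected, of non-positive (resp.\ negative) curvature and $\pi_M^{\ast}\beta$ is bounded, Theorem \ref{T3} gives $\pi_M^{\ast}\beta=d\gamma$ with $\gamma$ sublinearly (resp.\ uniformly) bounded. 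Lifting $\psi$ to the $\Gamma$-equivariant map $\Psi\colon\tilde N\to\tilde M^{2n}$, which is Lipschitz because it covers a map of compact manifolds, $\Psi^{\ast}\gamma$ keeps the same sublinear (resp.\ uniform) bound. Both $\Psi^{\ast}(\pi_M^{\ast}\beta)$ and $\pi_N^{\ast}\w'$ descend to closed $2$-forms on $N$ in the class $\psi^{\ast}[\beta]=[\w']$, so their difference is $d\mu$ for a $1$-form $\mu$ on the compact $N$; hence $\pi_N^{\ast}\w'=d\eta$ with $\eta:=\Psi^{\ast}\gamma-\pi_N^{\ast}\mu$ obeying $|\eta|_{\pi_N^{\ast}g}\le c\bigl(1+\rho_{\pi_N^{\ast}g}(\cdot,x_0)\bigr)$ (resp.\ $|\eta|_{\pi_N^{\ast}g}\le c$), since $\pi_N^{\ast}\mu$ is bounded.

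Then I would conclude exactly as in Theorem \ref{T4}. The metric on $\tilde N$ induced by $\pi_N^{\ast}\w'$ is $\pi_N^{\ast}g'=e^{-\pi_N^{\ast}f}\,\pi_N^{\ast}g$, quasi-isometric to $\pi_N^{\ast}g$ because $\pi_N^{\ast}f$ is bounded, so $\eta$ keeps its sublinear (resp.\ uniform) bound for $\pi_N^{\ast}g'$ and the reduced $L^2$-spaces $\mathcal{H}^k_{(2)}$ are unaffected. Hence $(\tilde N,\pi_N^{\ast}g')$ is a complete K\"{a}hler manifold with $d$(sublinear) (resp.\ $d$(bounded)) K\"{a}hler form, and Theorem \ref{T5} gives $h^k_{(2)}(N)=\dim_\Gamma\mathcal{H}^k_{(2)}(\tilde N)=0$ for all $k\ne n$, together with $h^n_{(2)}(N)\ge 1$ in the negative case. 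Atiyah's $L^2$-index theorem then yields $(-1)^n\chi(N)=h^n_{(2)}(N)\ge 0$, with strict inequality when the curvature is negative, and $\chi(M^{2n})=\chi(N)$ finishes the proof.

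The only genuinely non-routine point is the transport in the middle paragraph: one must carry a bounded (resp.\ sublinear) primitive of a closed $2$-form from $M^{2n}$, where the curvature supplies it, over to the GCK model $N$, using only that the two are homotopy equivalent closed aspherical manifolds. This is exactly Gromov's mechanism for treating manifolds merely homotopy equivalent to negatively curved ones; everything else is bookkeeping with quasi-isometric metrics and the $L^2$-index formula. (Equivalently, the Corollary is the K\"{a}hler case of the theorems of Gromov and of Cao--Xavier and Jost--Zuo.)
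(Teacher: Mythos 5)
Your argument is correct, but it takes a genuinely different route from the paper, which disposes of this corollary in a single line: since a closed GCK manifold is $d$(bounded) LCK, the statement is quoted as an instance of Theorem \ref{T4} (more accurately, of Theorem \ref{T6}, because the curvature hypothesis lives on $M^{2n}$ while only a homeomorphism to the GCK model is assumed --- precisely the point you flag). What you do instead is re-prove the relevant case of Theorem \ref{T6} directly, and with a different organization than the paper's proof of that theorem: the paper pulls the conformally rescaled form over to the curved manifold via $G\circ\pi$, applies Theorem \ref{T3} there, then pulls the primitive back with $\tilde{F}$ and runs Theorem \ref{T5} against the pulled-back structure $e^{-(\tilde{G}\circ\tilde{F})^{\ast}f}(G\circ F\circ\pi)^{\ast}g_{1}$ on $\tilde{M}_{1}$; you instead produce the sublinear (resp.\ bounded) primitive on the curved cover $\tilde{M}^{2n}$, transport it to the K\"ahler cover $\tilde{N}$ by the Lipschitz lifted homotopy inverse $\Psi$ together with an exact correction $\pi_{N}^{\ast}\mu$ coming from the cohomological identity $\psi^{\ast}\phi^{\ast}[\omega']=[\omega']$ on the compact model, and then apply Theorem \ref{T5} to the honest complete K\"ahler metric $\pi_{N}^{\ast}g'$, quasi-isometric to $\pi_{N}^{\ast}g$ because $\pi_{N}^{\ast}f$ is bounded. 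This buys two things: Theorem \ref{T5} is invoked for a genuine K\"ahler metric, so you never have to treat a pullback under a mere homotopy equivalence (such as $(G\circ F\circ\pi)^{\ast}g_{1}$, which need not be nondegenerate) as a metric; and the transport of the curvature hypothesis, which the paper's one-line citation of Theorem \ref{T4} leaves implicit, is carried out explicitly. The only blemishes are cosmetic: the sign of $\mu$ in $\eta=\Psi^{\ast}\gamma-\pi_{N}^{\ast}\mu$ depends on the convention $\omega'-\psi^{\ast}\beta=d\mu$, and one could simply take $\beta=\phi^{\ast}\omega'$; neither affects the argument.
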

\begin{proof}
The conclusion follows form Theorem \ref{T4} and the GCK manifold is $d$(bounded).
\end{proof}
In \cite{CX}, the authors shown that the property of $d$(sublinearity) has homotopy invariance.
\begin{lemma}(\cite[Lemma 3]{CX})\label{L1}
	Let $F:M_{1}\rightarrow M_{2}$ be a smooth homotopy equivalence between two compact Riemannian manifolds, $\pi:\tilde{M}_{i}\rightarrow M_{i}$ the universal covering maps for $i=1,2$. Then, for any closed differential form $\a$ on $M_{2}$, $\pi^{\ast}(\a)$ is $d$(sublinear) (resp. $d$(bounded)) on $\tilde{M}_{2}$ if the form $(F\circ\pi)^{\ast}(\a)$ is $d$(sublinear) (resp. $d$(bounded)) on $\tilde{M}_{1}$.
\end{lemma}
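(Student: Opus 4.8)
The plan is to lift every object to the universal covers, transport the bounded/sublinear primitive from $\tilde{M}_{1}$ to $\tilde{M}_{2}$ by means of a homotopy inverse of $F$, and then control the growth of the resulting primitive using the elementary fact that a lift of a smooth map between compact manifolds has uniformly bounded differential.

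First I would set up the lifts. Write $\pi_{i}:\tilde{M}_{i}\to M_{i}$ for the two covering maps. Since $F$ is a smooth homotopy equivalence, fix a smooth homotopy inverse $G:M_{2}\to M_{1}$ together with a smooth homotopy $H:M_{2}\times[0,1]\to M_{2}$ with $H_{0}=\mathrm{id}_{M_{2}}$ and $H_{1}=F\circ G$. Lift $F$ and $G$ to maps $\tilde{F}:\tilde{M}_{1}\to\tilde{M}_{2}$ and $\tilde{G}:\tilde{M}_{2}\to\tilde{M}_{1}$ with $\pi_{2}\circ\tilde{F}=F\circ\pi_{1}$ and $\pi_{1}\circ\tilde{G}=G\circ\pi_{2}$, and lift $H$ to $\tilde{H}:\tilde{M}_{2}\times[0,1]\to\tilde{M}_{2}$ with $\tilde{H}_{0}=\mathrm{id}_{\tilde{M}_{2}}$. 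The endpoint $\tilde{H}_{1}$ is some lift of $F\circ G$, hence equals $\sigma\circ\tilde{F}\circ\tilde{G}$ for a deck transformation $\sigma$; replacing $\tilde{F}$ by $\sigma\circ\tilde{F}$ (still a lift of $F$, and unaffecting the pullback since $\pi_{2}\circ\sigma=\pi_{2}$) we may arrange $\tilde{H}_{1}=\tilde{F}\circ\tilde{G}$. The hypothesis that $(F\circ\pi_{1})^{\ast}\a$ is $d$(sublinear) (resp. $d$(bounded)) reads, after the identification $(F\circ\pi_{1})^{\ast}\a=(\pi_{2}\circ\tilde{F})^{\ast}\a=\tilde{F}^{\ast}\pi_{2}^{\ast}\a$, as $\tilde{F}^{\ast}\pi_{2}^{\ast}\a=d\be$ with $\be$ a $d$(sublinear) (resp. bounded) primitive on $\tilde{M}_{1}$.

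The key observation is a Lipschitz bound. Because $\pi_{1},\pi_{2}$ are local isometries while $F,G,H$ are smooth maps on compact manifolds (hence have uniformly bounded differentials), their lifts $\tilde{F},\tilde{G},\tilde{H}$ have uniformly bounded differentials as well, the bounds being inherited pointwise through the local isometries. In particular each lift is globally Lipschitz for the lifted Riemannian distances; fix $L>0$ with $\rho_{\tilde{g}_{1}}(\tilde{G}(x),\tilde{G}(y))\leq L\,\rho_{\tilde{g}_{2}}(x,y)$ and with $\|d\tilde{H}\|\leq L$. Now apply $\tilde{G}^{\ast}$ to $\tilde{F}^{\ast}\pi_{2}^{\ast}\a=d\be$, giving $d(\tilde{G}^{\ast}\be)=(\tilde{F}\circ\tilde{G})^{\ast}\pi_{2}^{\ast}\a$, and apply the chain homotopy formula $\tilde{H}_{1}^{\ast}\mu-\tilde{H}_{0}^{\ast}\mu=d(Q\mu)+Q(d\mu)$ for $Q\mu=\int_{0}^{1}\iota_{\pa_{t}}\tilde{H}^{\ast}\mu\,dt$ to $\mu=\pi_{2}^{\ast}\a$. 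Since $\pi_{2}^{\ast}\a$ is closed this yields $(\tilde{F}\circ\tilde{G})^{\ast}\pi_{2}^{\ast}\a-\pi_{2}^{\ast}\a=d(Q\pi_{2}^{\ast}\a)$, whence
$$\pi_{2}^{\ast}\a=d\big(\tilde{G}^{\ast}\be-Q\pi_{2}^{\ast}\a\big).$$

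It remains to check that the primitive $\gamma:=\tilde{G}^{\ast}\be-Q\pi_{2}^{\ast}\a$ has the right growth. Choosing the basepoint on $\tilde{M}_{1}$ to be $\tilde{G}(x_{0})$ and using the Lipschitz bound for $\tilde{G}$, one gets $|\tilde{G}^{\ast}\be(x)|\leq C\,|\be(\tilde{G}(x))|\leq c'(1+\rho_{\tilde{g}_{2}}(x,x_{0}))$ in the non-positive case (resp. $|\tilde{G}^{\ast}\be(x)|\leq c'$ in the negative case), where $C$ is the constant governing the pullback of forms by the $L$-Lipschitz map $\tilde{G}$. For the second term, $\pi_{2}^{\ast}\a$ is bounded (it is the pullback of a form on the compact manifold $M_{2}$ by a local isometry), so $\tilde{H}^{\ast}\pi_{2}^{\ast}\a$ is bounded via $\|d\tilde{H}\|\leq L$, and integrating over the compact interval $[0,1]$ shows $Q\pi_{2}^{\ast}\a$ is bounded. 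Hence $\gamma$ is $d$(sublinear) (resp. bounded), and since $\pi_{2}^{\ast}\a$ is itself bounded this exhibits $\pi_{2}^{\ast}\a$ as $d$(sublinear) (resp. $d$(bounded)), as claimed. The main obstacle is the bounded-geometry step: one must justify that the lifts of the smooth maps and of the smooth homotopy from the compact base have globally bounded differentials—this is exactly where compactness of $M_{1},M_{2}$ and the local-isometry property of the coverings enter—and, relatedly, that the endpoints of the lifted homotopy can be matched to $\tilde{F}\circ\tilde{G}$ by an admissible choice of lifts.
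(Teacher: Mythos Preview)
The paper does not supply its own proof of this lemma; it simply quotes it as \cite[Lemma~3]{CX} and then, in the proof of Theorem~\ref{T6}, reuses the same mechanism (lifting $F$, $G$ to $\tilde F$, $\tilde G$ and exploiting the commutative diagram). Your argument is correct and is exactly the Cao--Xavier proof: transport the primitive via $\tilde G^{\ast}$, repair the discrepancy between $(\tilde F\circ\tilde G)^{\ast}\pi_{2}^{\ast}\a$ and $\pi_{2}^{\ast}\a$ with the fiberwise-integration operator $Q$ coming from the lifted homotopy $\tilde H$, and control all sizes using the fact that lifts of smooth maps between compact bases through locally isometric covers are globally Lipschitz. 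The normalization $\tilde H_{1}=\tilde F\circ\tilde G$ via a deck transformation, and the observation that $Q\pi_{2}^{\ast}\a$ is uniformly bounded because $\pi_{2}^{\ast}\a$ is, are precisely the points one needs; nothing is missing.
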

\begin{proof}[\textbf{Proof of Theorem \ref{T6}}]
	Since $F$ is a homotopy equivalence, there exists a smooth map $G:M_{2}\rightarrow M_{1}$ such that both $F\circ G$ and $G\circ F$ are homotopic to the identity maps. Clearly, the maps $F$ and $G$ can be lifted to the universal covering spaces. Let
	then $\tilde{F}:\tilde{M}_{1}\rightarrow\tilde{M}_{2}$ and $\tilde{G}:\tilde{M}_{2}\rightarrow \tilde{M}_{1}$ be the lifted maps, so that the following diagram commutes:
	
	\centerline{
		\xymatrix{
			\tilde{M}_{1}\ar[r]^-{\tilde{F}}\ar[d]_{\pi}& \tilde{M}_{2}\ar[r]^-{\tilde{G} }\ar[d]_{\pi}& \tilde{M}_{1}\ar[d]_{\pi} \\
			M_{1}\ar[r]^-{F} & M_{2}\ar[r]^-{G} & M_{1}
		}
	}
	
	Since $M_{1}$ is compact LCK manifold, the Lee form $\theta$ is bounded and $\pi$ is a local isometry, $\pi^{\ast}\theta$ is a bounded form on $\tilde{M}_{1}$. By the assumption, there is a bounded function $f$ on $\tilde{M}_{1}$ such that
	 $${\pi^{\ast}\theta=df.}$$
	{Following the idea in Lemma \ref{L1}, the assumption and the commutativity of the diagram imply that the form 
	\begin{equation*}
	(G\circ \pi)^{\ast}\theta=(\pi\circ\tilde{G})^{\ast}\theta=\tilde{G}^{\ast}(\pi^{\ast}\theta)=\tilde{G}^{\ast}df=d(\tilde{G}^{\ast}f)
	\end{equation*}
	is $d$(bounded).} Thus the form $e^{-\tilde{G}^{\ast}f}(G\circ \pi)^{\ast}\w$ on $\tilde{M}_{2}$ is closed because
	\begin{equation}\nonumber
	\begin{split}
	de^{-\tilde{G}^{\ast}f}(G\circ \pi)^{\ast}\w&=e^{-\tilde{G}^{\ast}f}\big{(}-d(\tilde{G}^{\ast}f)\wedge(G\circ\pi)^{\ast}\w+d(G\circ\pi)^{\ast}\w\big{)}\\
	&=e^{-\tilde{G}^{\ast}f}\big{(}{-(G\circ\pi)^{\ast}\theta\wedge(G\circ\pi)^{\ast}\w}+d(G\circ\pi)^{\ast}\w\big{)}\\
	&=e^{-\tilde{G}^{\ast}f}(G\circ\pi)^{\ast}{(-\theta\wedge\w+d\w)=0.}\\
	\end{split}
	\end{equation}
	Following Theorem \ref{T3}, it implies that $e^{-\tilde{G}^{\ast}f}(G\circ \pi)^{\ast}\w$ on $\tilde{M}_{2}$ is $d$(sublinear) (resp. $d$(bounded)). Thus the lifted K\"{a}hler form $ \tilde{F}^{\ast}\big{(}e^{-\tilde{G}^{\ast}f}(G\circ \pi)^{\ast}\w\big{)}$ on $\tilde{M}_{1}$ is $d$(sublinear) (resp. $d$(bounded)) as well. On the other hand, by the commutativity of the diagram 
	$$(G\circ F\circ \pi)^{\ast}\w=(G\circ\pi\circ\tilde{F})^{\ast}\w.$$
	We then have  
	$$ \tilde{F}^{\ast}\big{(}e^{-\tilde{G}^{\ast}f}(G\circ \pi)^{\ast}\w\big{)}=e^{-(\tilde{G}\circ\tilde{F})^{\ast}f} (G\circ\pi\circ\tilde{F})^{\ast}\w=e^{-(\tilde{G}\circ\tilde{F})^{\ast}f}(G\circ F\circ \pi)^{\ast}\w.$$
	Following Theorem \ref{T5}, it implies that the spaces of $L^{2}$-harmonic $k$-forms with respect to metric $e^{-(\tilde{G}\circ\tilde{F})^{\ast}f}(G\circ F\circ \pi)^{\ast}g_{1}$satisfies $$\mathcal{H}^{k}_{(2)}(\tilde{M}_{1},e^{-(\tilde{G}\circ\tilde{F})^{\ast}f}(G\circ F\circ \pi)^{\ast}g_{1})=\{0\}, \forall\ k\neq n,$$
	in non-positive sectional curvature  case, and $$\mathcal{H}^{k}_{(2)}(\tilde{M}_{1},e^{-(\tilde{G}\circ\tilde{F})^{\ast}f}(G\circ F\circ \pi)^{\ast}g_{1})\neq\{0\},$$
	in negative sectional curvature case.
	
	The metric on $M_{1}$ induced by $(G\circ F)^{\ast}\w$ is $(G\circ F)^{\ast}g_{1}$. The universal covering space of $(M_{1},(G\circ F)^{\ast}\w)$ is  $(\tilde{M}_{1},(G\circ F\circ\pi)^{\ast}\w)$. We denote by $\mathcal{H}^{k}_{(2)}(\tilde{M}_{1},(G\circ F\circ\pi)^{\ast}g_{1})$ the spaces of $L^{2}$-harmonic $k$-forms on $\Om^{k}_{(2)}(\tilde{M}_{1})$ with respect to metric $(G\circ F\circ\pi)^{\ast}g_{1}$. The Euler characteristic $\chi(M_{1})$ is a topology invariant. The Atiyah index theorem for covers \cite{Atiyah} then gives $$\chi(M_{1})=\sum_{k=0}(-1)^{k}h_{(2)}^{k}({M}_{1}),$$
	where $$h_{(2)}^{k}({M}_{1}):=\dim_{\Gamma}\mathcal{H}^{k}_{(2)}(\tilde{M}_{1},(G\circ F\circ\pi)^{\ast}g_{1}), \forall\ 0\leq k\leq n.$$
	Since the metrics $e^{-(\tilde{G}\circ\tilde{F})^{\ast}f}(G\circ F\circ \pi)^{\ast}g_{1}$, $(G\circ F\circ \pi)^{\ast}g_{1}$ are quasi-isometric, we obtain that $h^{k}_{(2)}({M}_{1})=0$, when $k\neq n$ in non-positive sectional curvature case and $h^{n}_{(2)}({M}_{1})=0$ in negative  sectional curvature case. Thus the Euler number of $M_{1}$ satisfies $(-1)^{n}\chi(M_{1})\geq$ (resp. $>$)  $0$.
\end{proof}

\section*{Acknowledgements}
We would like to thank Professor H.Y. Wang for drawing our attention to the LCK manifold and generously helpful suggestions about these. {We would also like to thank the anonymous referee for  careful reading of my manuscript and helpful comments.} This work is supported by Nature Science Foundation of China No. 11801539.

\bigskip
\footnotesize

\end{document}